\newtheorem{theorem}{Theorem}
\newtheorem{definition}[theorem]{Definition}
\newtheorem{lemma}[theorem]{Lemma}
\newtheorem{corollary}[theorem]{Corollary}
\theoremstyle{remark}
\newtheorem{myremarks}[theorem]{Remarks}
\newcounter{numcount}
\newcommand{\labelnummer}{\mbox{\normalfont (\roman{numcount})}}%
\newenvironment{nummer}%
  {\let\curlabelspeicher\@currentlabel%
    \begin{list}{\labelnummer}%
      {\usecounter{numcount}\leftmargin0pt%
        \topsep0.5ex\partopsep2ex\parsep0pt\itemsep0ex\@plus1\p@%
        \labelwidth2.5em\itemindent3.5em\labelsep1em%
      }%
    \let\saveitem\item%
    \def\item{\saveitem%
      \def\@currentlabel{{\upshape\curlabelspeicher}$\,$\labelnummer}}%
    \let\savelabel\label%
    \def\label##1{\savelabel{##1}%
      \@bsphack%
        \ifmmode\else%
          \protected@write\@auxout{}%
          {\string\newlabel{##1item}{{\labelnummer}{\thepage}}}%
        \fi%
      \@esphack%
    }%
  }{\end{list}}%
\renewcommand{\appendix}{\def\thesection{\textsc{Appendix}}}
 \let\leq\le
 \let\geq\ge
\let\Re\undefined 
\DeclareMathOperator{\Re}{Re}
\DeclareMathOperator{\tr}{tr\kern1pt}
\newif\ifper\pertrue
\def\per{.}
\def\bti{\@ifnextchar[\bbti\bbbti}
\def\bbti[#1]#2{#2, #1.}
\def\bbbti#1{#1.}
\def\z{\@ifnextchar[\zz\zzz}
\def\zz[#1]#2#3#4#5{\perfalse\emph{#2} \textbf{#3}, #4 (#5) [#1]}
\def\zzz#1#2#3#4{\emph{#1} \textbf{#2}, #3 (#4)\ifper\per\fi\pertrue}
\def\pub{\@ifstar\pubstar\pubnostar}
\def\pubnostar{\@ifnextchar[\@@pubnostar\@pubnostar}
\def\@@pubnostar[#1]#2#3#4{#2, #3, #4, #1\ifper\per\fi\pertrue}
\def\@pubnostar#1#2#3{#1, #2, #3\ifper\per\fi\pertrue}
\def\pubstar[#1]#2#3#4{\perfalse #2, #3, #4 [#1]\pertrue}
\newcommand{\beq}{\begin{equation}}
\newcommand{\eeq}{\end{equation}}
\newcommand{\ba}{\begin{array}}
\newcommand{\ea}{\end{array}}
\newcommand{\bea}{\begin{eqnarray}}
\newcommand{\eea}{\end{eqnarray}}
\newcommand{\R}{\mathbb{R}}
\newcommand{\Z}{\mathbb{Z}}
\newcommand{\N}{\mathbb{N}}
\def\P{I\kern-.30em{P}}
\def\E{I\kern-.30em{E}}
\renewcommand{\E}{\mathbb{E}\mkern2mu}
\renewcommand{\P}{\mathbb{P}}
\begin{document}

\title[Monotone trace properties]{Some trace monotonicity properties and applications}

\author[J.-M.\ Combes]{Jean-Michel Combes}
\address{(J.-M.\ Combes) CPT CNRS,
Luminy Case 907,
F-13288 Marseille C\'edex 9,
France}
\email{jmcombes@cpt.uni-mrs.fr}

\author[P.\ D.\ Hislop]{Peter D.\ Hislop}
\address{(P.\ D.\ Hislop) Department of Mathematics,
    University of Kentucky,
    Lexington, Kentucky  40506-0027, USA}
\email{peter.hislop@uky.edu}

\thanks{PDH was partially supported by NSF through grant DMS-1103104 and the Universit\'e de Toulon while some of this work was done.}

\begin{abstract}
  We present some results on the monotonicity of some traces involving
functions of self-adjoint operators with respect to the natural
ordering of their associated quadratic forms.
  We also apply these results to complete a proof of the Wegner estimate for continuum models
  of random Schr\"odinger operators as given in \cite{co-hi1}.

 \end{abstract}

\maketitle \thispagestyle{empty}


\section{Statement of the Problem and Result}\label{sec:introduction}

We consider two lower-semibounded self-adjoint operators $A$ and $B$ associated with closed symmetric, lower-semibounded quadratic forms
$q_A$ and $q_B$ with form domains $Q(A)$ and $Q(B)$, respectively.
We suppose that $q_A \leq q_B$. This inequality means that $Q(B) \subset Q(A)$ and that
for all $\varphi \in Q(B)$, we have
$q_A(\varphi) \leq q_B(\varphi)$.
Under these conditions, Kato proved \cite[Theorem 2.21, chapter VI]{Kato} the following relationship between the resolvents of the two operators $A$ and $B$.  For all
$a > - \inf \sigma (A)$, we have
\beq\label{eq:qf1}
(B+a)^{-1} \leq (A+a)^{-1} .
\eeq

This resolvent inequality may be used to derive several interesting relations between the traces of functions of $A$ and $B$ under some additional assumptions. We will prove that if $f \geq 0$ and $g$ is a member of a class of functions $\mathcal{L}$ described in Definition \ref{defn:fncs1}, then
$$
{\rm Tr} (f(B) g(B)) \leq {\rm Tr} (f(B) g(A) ),
$$
see Theorem \ref{thm:funct-calc1}. We compare these inequalities with L\"owner's Theorem (see section \ref{sec:op-monotone1}) on operator monotone functions. We also use these results to complete a proof of Wegner's estimate for random Schr\"odinger operators given in \cite{co-hi1}. These results rely on the following technical theorem.

%

\begin{theorem}\label{prop:inequalities1}
Suppose that $A$ and $B$ are two lower semibounded self-adjoint operators with quadratic forms $q_A$ and $q_B$ and form domains $Q(A)$ and $Q(B)$. Suppose that $A$ and $B$ are relatively form bounded in that
\begin{enumerate}
\item the form domains satisfy $Q(B) \subset Q(A)$, and
\item for all $\psi \in Q(B)$, we have $q_A(\psi) \leq q_B(\psi)$.
 \end{enumerate}
Let $P_B$ project onto a $B$-invariant subspace so that for some $m \in \N$, and for all $a > - \inf \sigma (A)$, the operator $P_B (B+a)^{-m}$ is in the trace class.
Then we have
\begin{enumerate}
\item For all $n \in \N$ large enough so that $m < 2^n$ and for all $a > - \inf \sigma (A)$,
 $$
 {\rm Tr } (P_B(B+a)^{-2^n}) \leq {\rm Tr} ( P_B (A + a)^{-2^n});
 $$
\item For any $t > 0$,
$$
{\rm Tr } (P_B  e^{-tB}) \leq {\rm Tr}( P_B e^{- t A } ).
$$
\end{enumerate}
\end{theorem}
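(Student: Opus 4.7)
The plan is to deduce part~(1) from Kato's resolvent inequality~\eqref{eq:qf1} together with Jensen's inequality applied in a spectral basis of $B$ restricted to $\mathrm{Ran}\,P_B$, and then obtain part~(2) from part~(1) by passing to the limit via the Euler formula $e^{-tB}=\lim_n(I+tB/2^n)^{-2^n}$.

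For part~(1), I set $X=(B+a)^{-1}$ and $Y=(A+a)^{-1}$, both bounded and nonnegative since $a>-\inf\sigma(A)\ge-\inf\sigma(B)$ (the latter following from $q_A\le q_B$). Kato's inequality gives $0\le X\le Y$, and since $P_B$ commutes with $B$ it commutes with $X$. Because $\mathrm{Ran}\,P_B$ is $B$-invariant, the spectral theorem produces an orthonormal basis $\{e_k\}$ of $\mathrm{Ran}\,P_B$ with $Be_k=\lambda_k e_k$, or, in the presence of continuous spectrum, the corresponding direct-integral decomposition; the argument below carries over unchanged. Setting $\sigma_k=(\lambda_k+a)^{-1}$, one computes $\mathrm{Tr}(P_B X^{2^n})=\sum_k\sigma_k^{2^n}$. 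In each vector state $\omega_k(\cdot)=\langle e_k,\cdot\,e_k\rangle$, Jensen's inequality applied to the convex monotone function $t\mapsto t^{2^n}$ on $[0,\infty)$ gives
\[
\omega_k\bigl(Y^{2^n}\bigr)\ \ge\ \omega_k(Y)^{2^n}\ \ge\ \omega_k(X)^{2^n}\ =\ \sigma_k^{2^n},
\]
where the second inequality uses $X\le Y$. Summing over $k$ yields part~(1). The restriction to a power of two plays no role in this step; any real exponent $p\ge 1$ would work.

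For part~(2), I substitute $a=2^n/t$ in part~(1) (valid for $n$ large enough that $a>-\inf\sigma(A)$) and multiply through by $(2^n/t)^{2^n}$ to get
\[
\mathrm{Tr}\!\left(P_B\bigl(I+tB/2^n\bigr)^{-2^n}\right)\ \le\ \mathrm{Tr}\!\left(P_B\bigl(I+tA/2^n\bigr)^{-2^n}\right).
\]
An elementary calculus check shows that for each fixed $\lambda$ with $1+t\lambda/2^n>0$ the sequence $(1+t\lambda/2^n)^{-2^n}$ decreases monotonically to $e^{-t\lambda}$. On the $B$-side the trace is $\sum_k(1+t\lambda_k/2^n)^{-2^n}$, which passes to $\sum_k e^{-t\lambda_k}=\mathrm{Tr}(P_B e^{-tB})$ by monotone convergence. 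On the $A$-side, where $P_B$ and $A$ do not commute, one writes the trace as $\sum_k\int(1+t\mu/2^n)^{-2^n}\,d\mu_{e_k}^A(\mu)$ for the spectral measure $\mu_{e_k}^A$ of $A$ in the state $\omega_k$; the same pointwise monotone decrease, combined with monotone convergence in each $\mu_{e_k}^A$ and positivity in the outer $k$-sum, yields $\mathrm{Tr}(P_B e^{-tA})$. The trace-class hypothesis on $P_B(B+a)^{-m}$ propagates to $P_B(B+a)^{-2^n}$ for $2^n\ge m$ by multiplication with the bounded factor $(B+a)^{-(2^n-m)}$. The main technical obstacle is precisely this justification of limit interchange on the $A$-side; the monotonicity of $n\mapsto(1+t\lambda/2^n)^{-2^n}$ is the key tool.
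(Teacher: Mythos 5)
Your proof is correct, and its overall architecture (Kato's inequality \eqref{eq:qf1}, diagonalization of $B$ on $\mathrm{Ran}\,P_B$, then the Euler product formula for part (2)) matches the paper's, but your part (1) replaces the paper's key step with a cleaner one. The paper iterates a Cauchy--Schwarz estimate: for a rank-one spectral projection $P_\lambda$ it bounds $\mathrm{Tr}(P_\lambda(A+a)^{-1})$ by $\|P_\lambda\|_2\,\|P_\lambda(A+a)^{-1}\|_2$ and squares, doubling the exponent at each stage --- which is exactly why only the powers $2^n$ appear in the statement. Your single application of Jensen's inequality $\omega_k(Y^{p})\ge\omega_k(Y)^{p}$ to the spectral probability measure of the bounded operator $Y=(A+a)^{-1}$ in the vector state $\omega_k$ is the $p$-fold version of that squaring trick and yields the inequality for every exponent $p\ge1$ at once; as you note, this gives Corollary \ref{cor:loewner1} directly, without the Laplace-transform detour the paper takes, and the dyadic restriction is an artifact of the iteration. (Your caveat about continuous spectrum is vacuous: the trace-class hypothesis on $P_B(B+a)^{-m}$ forces $B|_{\mathrm{Ran}\,P_B}$ to have discrete spectrum with finite multiplicities.) Your part (2) is the paper's argument verbatim.

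The one point to tighten is the limit interchange on the $A$-side of part (2), which you correctly single out. Monotone decrease of $n\mapsto(1+t\mu/2^n)^{-2^n}$ together with the fact that each $\mu^A_{e_k}$ is a probability measure settles each fixed $k$ by dominated convergence, but ``positivity in the outer $k$-sum'' only gives Fatou, which points the wrong way: to conclude $\lim_n\sum_k\le\sum_k\lim_n$ for a decreasing sequence you need a summable majorant, i.e.\ $\mathrm{Tr}\bigl(P_B(1+tA/2^{n_0})^{-2^{n_0}}\bigr)<\infty$ for some $n_0$, and the hypotheses only guarantee finiteness of the $B$-side traces. (The paper's own proof asserts the first equality in \eqref{eq:exp1} with no justification at all, so you are not worse off.) The gap can be closed without any limiting procedure: each eigenvector $e_k$ lies in $\dom(B)\subset Q(B)\subset Q(A)$, so Jensen's inequality applied to the convex function $\mu\mapsto e^{-t\mu}$ and the probability measure $\mu^A_{e_k}$ gives $\langle e_k,e^{-tA}e_k\rangle\ge e^{-t\,q_A(e_k)}\ge e^{-t\lambda_k}$, and summing over $k$ is part (2) directly.
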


\begin{proof}
\noindent
1. The result of Kato \cite[Theorem 2.21, chapter VI]{Kato}, stated above,
implies that $(B+a)^{-1} \leq (A+a)^{-1}$.
We first suppose that $P_B$ is a non-zero rank one projection $P_B = P_\lambda$,
so that $B P_\lambda = \lambda P_\lambda$. From \eqref{eq:qf1}, it follows that for $a > - \inf \sigma (A)$, we have
\bea\label{eq:qf2}
{\rm Tr } P_\lambda & = & (\lambda + a) ~ {\rm Tr} ( P_\lambda (B + a)^{-1} ) \nonumber \\
 & \leq & (\lambda + a) ~ {\rm Tr} ( P_\lambda (A + a)^{-1} )\nonumber \\
 & \leq & (\lambda + a) \| P_\lambda \|_2 \|P_\lambda (A + a)^{-1} \|_2 .
 \eea
Since $P_\lambda$ is a rank one projector, we have
\beq\label{eq:rank-one1}
\| P_\lambda \|_2 = \| P_\lambda \|_1 = {\rm Tr} P_\lambda = 1,
\eeq
and
\beq\label{eq:rank-one2}
\| P_\lambda (A +a)^{-1} \|_2 = ({\rm Tr} P_\lambda (A+a)^{-2} )^{1/2}.
\eeq
Upon squaring inequality \eqref{eq:qf2} and using the results \eqref{eq:rank-one1}--\eqref{eq:rank-one2}, we obtain
\bea\label{eq:qf3}
{\rm Tr } P_\lambda & \leq  & (\lambda + a)^2 \|P_\lambda (A + a)^{-1} \|_2^2 \nonumber \\
  & = &     (\lambda + a)^2 ~ {\rm Tr} ( P_\lambda (A + a)^{-2}) .
\eea
%
%
%
We continue by rewriting the trace on the right in \eqref{eq:qf3} using the Hilbert-Schmidt norm. We square the resulting inequality, use \eqref{eq:rank-one1}--\eqref{eq:rank-one2}, and obtain
%
%
\bea\label{eq:qf5}
{\rm Tr } P_\lambda  \leq  (\lambda + a)^{2^2} ~ {\rm Tr} ( P_\lambda (A + a)^{-{2^2}}) .
\eea
Continuing in this way, we obtain for any $n \in \N$:
\beq\label{eq:qf6}
{\rm Tr } P_\lambda  \leq  (\lambda + a)^{2^n} ~ {\rm Tr} ( P_\lambda (A + a)^{-{2^n}}) .
\eeq
This may also be written as:
\beq\label{eq:qf7}
{\rm Tr } ( P_\lambda (B+a)^{-2^n} ) \leq  {\rm Tr} ( P_\lambda (A + a)^{-{2^n}}) .
\eeq
\noindent
2. We now assume that $P_B$ is a projection operator diagonalizing $B$ so that $P_B = \sum_j P_{\lambda_j}$
with $B P_{\lambda_j} = \lambda_j P_{\lambda_j}$.
If we take $2^n > m$, we can sum the inequalities \eqref{eq:qf7} over $j$ to obtain
\beq\label{eq:qf6.1}
{\rm Tr }(P_B(B+a)^{-2^n})  \leq  {\rm Tr} ( P_B (A + a)^{-{2^n}}) .
\eeq

\noindent
3. For the exponential bound, we first note that by assumption $P_B (B+a)^{-m}$ is trace class for some integer $m \geq 0$, so that $P_B e^{-tB}$ is also trace class since $(B+a)^{m} e^{-tB}$ is bounded for $t > 0$.
For $t > 0$ and $b \in \R$ so that $b  > - \inf \sigma(A)$, we obtain
\bea\label{eq:exp1}
{\rm Tr } P_Be^{-t(A+b)} &= &
 \lim_{n \rightarrow \infty} {\rm Tr }\left[ P_B \left( 1 + \frac{t (A + b)}{2^n} \right)^{-2^n} \right]   \nonumber \\
 & \geq & \lim_{n \rightarrow \infty} {\rm Tr }\left[ P_B \left( 1 + \frac{t (B + b)}{2^n} \right)^{-2^n} \right]   \nonumber \\
  & = & {\rm Tr } P_B e^{- t(B+b)},
 \eea
 where we used \eqref{eq:qf6.1} on the second line.
It follows that
 \beq\label{eq:exp2}
 {\rm Tr } (P_B(I)e^{-t(B+b)})  \leq {\rm Tr } (P_B e^{-t(A+b)}).
 \eeq
This proves the second claim.
 \end{proof}

\section{An application to trace inequalities}\label{subsec:funct-calc1}

Theorem \ref{prop:inequalities1} may be applied to a large class of functions of self-adjoint operators in order to obtain some inequalities relating traces of functions of self-adjoint operators.

\begin{definition}\label{defn:fncs1}
A real-valued function $g$ is in the class $\mathcal{L}$ if
there is a nonnegative $\sigma$-finite Borel measure $\rho$ supported on $[0, \infty)$ so that
for $s > 0$
\beq\label{eq:class2}
g(s) = \int_0^\infty e^{-st} {d \rho(t)} .
\eeq
\end{definition}


\begin{theorem}\label{thm:op-calc1}
Let self-adjoint operators $A, B$ and projector $P_B$ be as in Theorem \ref{prop:inequalities1}. Then for any $g \in \mathcal{L}$ such that $P_B g(B)$ is trace class,  one has
\beq\label{eq:op-calc1}
{\rm Tr} P_B g(B) \leq {\rm Tr} P_B g(A) .
\eeq
\end{theorem}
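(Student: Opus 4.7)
The plan is to reduce the inequality to the exponential bound of Theorem \ref{prop:inequalities1}(2) by integrating against the defining measure $\rho$. By the spectral theorem combined with the representation \eqref{eq:class2}, one has the positive operator-valued integrals
\[
g(B) = \int_0^\infty e^{-tB}\, d\rho(t), \qquad g(A) = \int_0^\infty e^{-tA}\, d\rho(t).
\]
Multiplying on the left by $P_B$, expanding the trace in an orthonormal basis $\{\phi_k\}$ of the range of $P_B$, and invoking Tonelli's theorem (applicable because every summand $\langle \phi_k, e^{-tB}\phi_k\rangle$ and every integrand is nonnegative) yields
\[
\mathrm{Tr}\bigl(P_B\, g(B)\bigr) = \int_0^\infty \mathrm{Tr}\bigl(P_B\, e^{-tB}\bigr)\, d\rho(t),
\]
and the analogous identity with $B$ replaced by $A$. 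Applying Theorem \ref{prop:inequalities1}(2) to the nonnegative function $t\mapsto \mathrm{Tr}(P_B e^{-tB})$ pointwise in $t>0$ and integrating against $d\rho$ then delivers \eqref{eq:op-calc1}.

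The only ancillary points to check are that (i) for each fixed $t>0$ the operator $P_B e^{-tB}$ is trace class, which follows by the same boundedness argument used at the end of the proof of Theorem \ref{prop:inequalities1}, since $P_B$ commutes with functions of $B$, $P_B(B+a)^{-m}$ is trace class by hypothesis, and $(B+a)^m e^{-tB}$ is a bounded function of $B$ for $t>0$; and (ii) $\mathrm{Tr}(P_B\, e^{-tA})$ is well defined in $[0,\infty]$, which is immediate from the expansion as a sum of nonnegative terms $\langle \phi_k, e^{-tA}\phi_k\rangle$. The hypothesis that $P_B\, g(B)$ is trace class ensures the left-hand side of \eqref{eq:op-calc1} is finite; if the right-hand side is $+\infty$ the inequality is trivial, and otherwise all quantities are finite.

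The main obstacle I anticipate is the Tonelli interchange between the trace (a sum over $k$) and the $\rho$-integral in $t$; this is mechanical because of the nonnegativity of all terms, but it must be written out carefully because $\rho$ is only $\sigma$-finite rather than finite. A minor bookkeeping point is the handling of $s=0$ in \eqref{eq:class2} and of the case in which $\sigma(A)$ or $\sigma(B)$ touches zero, which is resolved either by monotone convergence (setting $g(0)=\rho([0,\infty))\in[0,\infty]$ as a limit from the right) or by noting that the contribution of the zero eigenspace to both sides is identical. Beyond these standard points, the proof reduces to ``apply Theorem \ref{prop:inequalities1}(2) under the integral sign''.
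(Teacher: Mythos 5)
Your proposal is correct and follows essentially the same route as the paper: the paper's proof is exactly the three-step computation of writing $\mathrm{Tr}(P_B g(B))$ as $\int_0^\infty \mathrm{Tr}(P_B e^{-tB})\,d\rho(t)$ via \eqref{eq:class2}, applying the exponential inequality of Theorem \ref{prop:inequalities1}(2) under the integral, and reassembling. Your added justifications (Tonelli for the trace--integral interchange, trace-class of $P_B e^{-tB}$) are details the paper leaves implicit, but the argument is identical in substance.
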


\begin{proof}
By the representation of $g$ in \eqref{eq:class2} and the inequality \eqref{eq:exp1} with $b=0$, one has
\bea\label{eq:laplace1}
{\rm Tr} P_B g(B) &=& \int_0^\infty {\rm Tr} ( P_B e^{-t B} ) ~d \rho (t) \nonumber \\
 & \leq & \int_0^\infty {\rm Tr} ( P_B e^{-t A} ) ~d \rho (t) \nonumber \\
 & = & {\rm Tr} ( P_B g(A) ).
 \eea
\end{proof}

A particularly useful example of functions $g$ are those related to powers of the resolvent of a self-adjoint operator.

\begin{corollary}\label{cor:loewner1}
Let self-adjoint operators $A, B$ and projector $P_B$ be as in Theorem \ref{prop:inequalities1}, and let $a > - \inf \sigma(A)$. For any $\beta > m$, where $m$ is defined in Theorem \ref{prop:inequalities1}, we have
\beq\label{eq:any-power1}
 {\rm Tr } (P_B (B + a)^{-\beta} ) \leq  {\rm Tr} ( P_B (A + a)^{-\beta}) .
\eeq
\end{corollary}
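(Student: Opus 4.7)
The plan is to reduce the claim to Theorem \ref{thm:op-calc1} by showing that $g(s) = s^{-\beta}$ belongs to the class $\mathcal{L}$ and applying that theorem to the shifted pair $\tilde A = A + a$, $\tilde B = B + a$. The key classical ingredient is the Gamma-function integral
$$s^{-\beta} \;=\; \frac{1}{\Gamma(\beta)} \int_0^\infty t^{\beta - 1} e^{-st}\, dt \qquad (s > 0,\ \beta > 0),$$
which exhibits $s \mapsto s^{-\beta}$ as the Laplace transform of the nonnegative $\sigma$-finite Borel measure $d\rho(t) = t^{\beta - 1}/\Gamma(\beta)\, dt$ on $[0, \infty)$. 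Hence $g \in \mathcal{L}$.

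Next I would check that the hypotheses of Theorem \ref{thm:op-calc1} are preserved under the shift by $a$. The form inequality $q_A \leq q_B$ immediately yields $q_{\tilde A} \leq q_{\tilde B}$ on $Q(\tilde B) = Q(B)$. Since the form ordering gives $\inf \sigma(A) \leq \inf \sigma(B)$, the hypothesis $a > -\inf \sigma(A)$ ensures that both $\tilde A$ and $\tilde B$ are strictly positive, so that the Laplace representation above is valid on their spectra. The projection $P_B$ still projects onto a $\tilde B$-invariant subspace, and the trace class hypothesis that $P_B (B+a)^{-m}$ be in the trace class is identical to $P_B \tilde B^{-m}$ being in the trace class.

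Finally, I must verify that $P_B g(\tilde B) = P_B(B+a)^{-\beta}$ is itself trace class so that the left-hand side of \eqref{eq:op-calc1} is well defined. Writing $(B+a)^{-\beta} = (B+a)^{-m}(B+a)^{-(\beta - m)}$ and noting that the second factor is bounded (because $B+a$ is strictly positive and $\beta - m > 0$), this follows at once from the trace class property of $P_B(B+a)^{-m}$. With these ingredients assembled, Theorem \ref{thm:op-calc1} applied to $(\tilde A, \tilde B, P_B, g)$ delivers \eqref{eq:any-power1} directly. There is no genuine obstacle here: the substantive monotonicity work has already been carried out in Theorem \ref{prop:inequalities1}, and the only point requiring care is tracking the strict positivity of $B + a$ and the trace class condition across the exponent $\beta > m$.
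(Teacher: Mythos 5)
Your proposal is correct and follows essentially the same route as the paper: both exhibit $s^{-\beta}$ as a Laplace transform via the Gamma-function integral, conclude that it lies in the class $\mathcal{L}$, and then invoke Theorem \ref{thm:op-calc1}. Your additional checks (that the hypotheses survive the shift by $a$ and that $P_B(B+a)^{-\beta}$ is trace class via the factorization $(B+a)^{-m}(B+a)^{-(\beta-m)}$) are details the paper leaves implicit, but they do not change the argument.
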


\begin{proof}
We use the Laplace transform formula valid for $\alpha > -1$ and $\Re z > 0$:
\beq\label{eq:laplace11}
\frac{1}{z^{ 1 + \alpha}} = \frac{1}{\Gamma ( 1 + \alpha)} \int_0^\infty e^{-zt} t^\alpha ~dt .
\eeq
This shows that the function $g(s) = s^{-\beta}$ is in the class $\mathcal{L}$ for any $\beta > 0$. The result
\eqref{eq:any-power1} follows from Theorem \ref{thm:op-calc1}.
\end{proof}

We conclude this section with the following generalisation of Theorem \ref{thm:op-calc1}.
It presents a trace comparison theorem for the class $\mathcal{L}$ of functions of self-adjoint operators.



\begin{theorem}\label{thm:funct-calc1}
Let $A$ and $B$ be two lower semibounded self-adjoint operators satisfying the hypotheses of Theorem \ref{prop:inequalities1}.
Suppose $g \in \mathcal{L}$
and
$f \geq 0$ so that $f(B)g(B)$ is trace class.
We then have
\beq\label{eq:comparison1}
{\rm Tr } (f(B)g(B)) \leq {\rm Tr}( f(B) g(A)) .
\eeq
\end{theorem}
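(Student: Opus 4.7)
The plan is to decompose the nonnegative weight $f(B)$ as a continuous superposition of spectral projections of $B$ via the layer-cake identity, and then to apply Theorem \ref{thm:op-calc1} projection by projection.

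Since $f \geq 0$, the layer-cake formula reads
\[
f(B) \;=\; \int_0^\infty P_u \, du, \qquad P_u \;:=\; E_B(\{\lambda : f(\lambda) > u\}),
\]
where $E_B$ denotes the spectral resolution of $B$ and each $P_u$ is therefore a spectral projection of $B$, in particular projecting onto a $B$-invariant subspace. Substituting this identity into $f(B)g(B)$, and into the cyclically equivalent positive form $g(A)^{1/2} f(B) g(A)^{1/2}$ of $f(B)g(A)$, and invoking Fubini--Tonelli on the resulting nonnegative integrands (legitimate since ${\rm Tr}(f(B)g(B)) < \infty$), one obtains
\[
{\rm Tr}(f(B)g(B)) \;=\; \int_0^\infty {\rm Tr}(P_u g(B)) \, du, \qquad {\rm Tr}(f(B)g(A)) \;=\; \int_0^\infty {\rm Tr}(P_u g(A)) \, du.
\]
The pointwise bound $u\,\chi_{\{f > u\}} \leq f$ yields the commuting operator inequality $u P_u \leq f(B)$, and hence $u\,{\rm Tr}(P_u g(B)) \leq {\rm Tr}(f(B)g(B)) < \infty$, so $P_u g(B)$ is trace class for every $u > 0$. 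Theorem \ref{thm:op-calc1} applied to each $P_u$ then gives ${\rm Tr}(P_u g(B)) \leq {\rm Tr}(P_u g(A))$, and integrating over $u > 0$ recovers \eqref{eq:comparison1}.

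The main obstacle I anticipate is the residual resolvent trace-class prerequisite of Theorem \ref{prop:inequalities1} for each $P_u$, namely that $P_u(B+a)^{-m}$ be in the trace class for some $m \in \N$, which is needed for the rigorous invocation of Theorem \ref{thm:op-calc1}. This condition is not an immediate consequence of ${\rm Tr}(f(B)g(B)) < \infty$ and must be obtained separately from the specific form of $g$ and the spectral properties of $B$. A clean workaround is to bypass Theorem \ref{thm:op-calc1} altogether: first use the Laplace representation of $g$ to reduce the claim to the exponential inequality ${\rm Tr}(f(B) e^{-tB}) \leq {\rm Tr}(f(B) e^{-tA})$ for each $t > 0$, then run the same layer-cake argument invoking Theorem \ref{prop:inequalities1}(2) directly on each $P_u$, and finally integrate against $d\rho(t)$. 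A spectral truncation of $B$ on the range of $P_u$ followed by a standard monotone limiting argument can be used to sidestep any a priori trace-class assumption on the $P_u(B+a)^{-m}$.
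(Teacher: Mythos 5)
Your argument is correct in substance and is structurally the same as the paper's: both decompose the nonnegative weight $f(B)$ into spectral projections of $B$, apply the single-projection inequality of Theorem \ref{thm:op-calc1} to each piece, and resum using $f\ge 0$. The difference lies in the choice of decomposition. The paper observes that trace-classness of $f(B)g(B)=(fg)(B)$ forces $B$ to have pure point spectrum with finite multiplicities on the set where $fg>0$, writes the weight as the discrete sum $\sum_j f(\lambda_j)P_{\lambda_j}$ over eigenprojections, and sums the inequalities ${\rm Tr}(P_{\lambda_j}g(B))\le{\rm Tr}(P_{\lambda_j}g(A))$ with weights $f(\lambda_j)\ge 0$; because each $P_{\lambda_j}$ is finite rank, the auxiliary hypothesis of Theorem \ref{prop:inequalities1} (that $P_B(B+a)^{-m}$ be trace class) is automatic and there is nothing left to check. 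Your layer-cake projections $P_u=E_B(\{f>u\})$ need not be finite rank, which is precisely the obstacle you flag. Your proposed repair does work: truncating to $P_{u,N}:=P_uE_B((-\infty,N])$ yields a finite-rank projection, since $g$ is decreasing and strictly positive (unless $\rho=0$, a trivial case), so that $u\,g(N)\,P_{u,N}\le u\,P_{u,N}\,g(B)\le f(B)g(B)$ forces $P_{u,N}$ to have finite trace; one then lets $N\to\infty$ by monotone convergence of traces. The Tonelli interchanges and the symmetrization ${\rm Tr}(f(B)g(A))={\rm Tr}(g(A)^{1/2}f(B)g(A)^{1/2})$ are fine. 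In short: correct and the same underlying idea, but the paper's eigenprojection decomposition is the cleaner implementation because it renders the trace-class prerequisite of Theorem \ref{prop:inequalities1} vacuous, whereas the continuous decomposition requires the extra truncation-and-limit step you anticipate.
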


\begin{proof}
Since $f(B)g(B)$ is assumed to be trace class, the operator $B$ must have pure point spectrum $\{ \lambda_j \}$ on the support of the function $fg$. For any $j$, it follows from Theorem \ref{thm:op-calc1} that
\beq\label{eq:comparison3}
{\rm Tr } (P_{\lambda_j} g(B)) \leq {\rm Tr}( P_{\lambda_j} g(A)),
\eeq
where, as above, $B P_{\lambda_j} = \lambda_j P_{\lambda_j}$. Multiplying both sides of \eqref{eq:comparison3}
by $f(\lambda_j) \geq 0$, and summing over $j$ results in \eqref{eq:comparison1}.
\end{proof}

We remark that if $g(B)$ is in the trace class, we may take $f = 1$ and obtain
\beq\label{eq:comparison2}
{\rm Tr } (g(B)) \leq {\rm Tr}( g(A)),
\eeq
a result that also follows from the Min-Max Theorem since any function $g \in \mathcal{L}$ is decreasing.


\section{A relation with operator monotone functions}\label{sec:op-monotone1}

The following class of functions was introduced by L\"owner \cite{lowner} in 1934 and is the object of his famous theorem that we now recall.

\begin{definition}\label{defn:op-monotone1}
Let $J \subset \R$ be a finite interval or a half-line. A function $g:J \rightarrow \R$ is operator monotone increasing (respectively, decreasing) in $J$ if for all pairs of self-adjoint operators $A,B$ with spectrum in $J$ the operator inequality $A \leq B$ implies the operator inequality $g(A) \leq g(B)$ (respectively $g(B) \leq g(A)$.)
\end{definition}

If $g$ is operator monotone decreasing, then \eqref{eq:comparison1} holds for any $f \geq 0$ and for all pairs of operators $A \leq B$ such that $f(B)g(B)$ is trace class. Because of this, we study the relationship between operator monotone decreasing functions and the class $\mathcal{L}$ of Definition \ref{defn:fncs1}.
For simplicity, we assume that $J = \R^+$.
We denote by $\mathcal{I}$ (respectively, $\mathcal{D}$) the class of operator monotone increasing (respectively, decreasing) functions on $\R^+$. The map $g \in \mathcal{D} \rightarrow \tilde{g} \in \mathcal{I}$ defined by
$\tilde{g}(s) := g(1/s)$, for $s > 0$, is a bijective involution between $\mathcal{D}$ and $\mathcal{I}$.

L\"owner's Theorem \cite{lowner} (see also \cite{Bendat-Sherman,Bhatia-Sinha,chansangiam,donoghue,hansen}) states that $g$ is operator monotone increasing if and only if $g$ has an analytic extension to the upper-half complex plane with a positive imaginary part. Such functions are known as Herglotz or Pick functions and have integral representations. For example, Hansen \cite{hansen} proved the following representation.

\begin{lemma}\cite[Corollary 5.1]{hansen}\label{lemma:representation1}
Let $\tilde{g}$ be a positive operator monotone increasing function on the half-line $\R^+$. Then there exists a bounded, positive measure $\mu$ on $\R^+$ such that
\beq\label{eq:repr1}
\tilde{g}(s) = \int_{\R^+} \frac{s (1+ \lambda)}{s + \lambda} ~d \mu (\lambda) , ~~ s > 0.
\eeq
\end{lemma}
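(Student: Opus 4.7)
The strategy is to reduce the claim to the Nevanlinna--Herglotz integral representation of Pick functions, which is the analytic engine behind L\"owner's theorem. Since $\tilde g$ is operator monotone increasing on $\R^+$, L\"owner's theorem provides an analytic extension of $\tilde g$ to the upper half plane with nonnegative imaginary part. Because $\tilde g$ is real on the half-line, Schwarz reflection across $(0,\infty)$ extends $\tilde g$ to an analytic function on the slit plane $\C \setminus (-\infty,0]$, and this regularity is precisely what localizes the representing measure.

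Concretely, I would first write the standard Nevanlinna--Herglotz representation
\beq
\tilde g(z) = a + b z + \int_\R \frac{1+\lambda z}{\lambda-z}\,\frac{d\sigma(\lambda)}{1+\lambda^2},
\eeq
with $a \in \R$, $b \geq 0$, and $\sigma$ a finite positive Borel measure on $\R$. Analyticity of $\tilde g$ on $\C \setminus (-\infty,0]$ forces $\supp \sigma \subset (-\infty,0]$, so after the substitution $\lambda \mapsto -\lambda$ the measure lives on $[0,\infty)$. The elementary identity
\beq
\frac{s(1+\lambda)}{s+\lambda} = (1+\lambda) - \frac{\lambda(1+\lambda)}{s+\lambda}
\eeq
then lets me repackage the constant $a$, the linear term $bs$, and the Nevanlinna correction $\frac{\lambda}{1+\lambda^2}$ together with the reflected measure into a single positive measure $\mu$ on $[0,\infty)$ paired with Hansen's kernel; positivity of $\tilde g$ is what guarantees $\mu \geq 0$.

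Boundedness of $\mu$ then falls out for free: since $\frac{s(1+\lambda)}{s+\lambda}\big|_{s=1} = 1$ identically in $\lambda$, evaluating the representation at $s=1$ yields $\mu(\R^+) = \tilde g(1) < \infty$.

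The main obstacle is the bookkeeping just described, namely showing that the free parameters $a$, $b$ and the $\frac{\lambda}{1+\lambda^2}$ correction collapse cleanly into a single positive measure $\mu$ paired with the single kernel $\frac{s(1+\lambda)}{s+\lambda}$, with no leftover constant or linear pieces. A clean way to organize this is to work first on the one-point compactification $[0,\infty]$, treating the $bs$ term as a limiting point mass at $\lambda = \infty$ (where the kernel degenerates to $s$) and the constant $a$ as a point mass at $\lambda = 0$ (where the kernel equals $1$), and then to argue from positivity of $\tilde g$ together with its asymptotic behavior as $s \to 0^+$ and $s \to \infty$ that these boundary contributions can be absorbed into an ordinary bounded positive measure on $[0,\infty)$ without violating $\mu \geq 0$.
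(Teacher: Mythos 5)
The paper does not actually prove this lemma --- it is quoted from Hansen \cite[Corollary 5.1]{hansen} without argument --- so there is no internal proof to compare against. Your derivation from the Nevanlinna--Herglotz representation is the standard route to this result, and almost all of it is sound: the localization of the representing measure to $(-\infty,0]$ via reflection, the identity $\frac{s(1+\lambda)}{s+\lambda}=(1+\lambda)-\frac{\lambda(1+\lambda)}{s+\lambda}$, the normalization $\mu(\R^+)=\tilde g(1)$ read off at $s=1$, and the identification of the $\lambda=0$ endpoint with the constant term, whose coefficient $\lim_{s\to 0^+}\tilde g(s)\geq 0$ is nonnegative precisely because $\tilde g$ is positive.

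The genuine gap is your final claim that the boundary contribution at $\lambda=\infty$ ``can be absorbed into an ordinary bounded positive measure on $[0,\infty)$.'' It cannot. Take $\tilde g(s)=s$, which is positive and operator monotone increasing. If one had $s=\int_{[0,\infty)}\frac{s(1+\lambda)}{s+\lambda}\,d\mu(\lambda)$ with $\mu$ bounded and positive, dividing by $s$ gives $\int_{[0,\infty)}\frac{1+\lambda}{s+\lambda}\,d\mu(\lambda)=1$ for all $s>0$; but for $s\geq s_0$ the integrand is bounded by the constant $\max(1,1/s_0)$ and tends to $0$ pointwise as $s\to\infty$, so dominated convergence forces the left-hand side to $0$, a contradiction. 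Thus whenever the Nevanlinna coefficient $b$ is strictly positive, the term $bs$ genuinely requires a point mass at $\lambda=\infty$, where the kernel degenerates to $s$. Hansen's actual Corollary 5.1 is stated with a finite measure on the compactified half-line $[0,\infty]$ for exactly this reason, and the lemma as transcribed here should be read that way. Your argument becomes correct if you stop at the measure $\alpha\delta_0+\nu+b\delta_\infty$ on $[0,\infty]$ rather than trying to push $\delta_\infty$ back into $[0,\infty)$; the discrepancy is harmless for the application in Section \ref{sec:op-monotone1}, since under $s\mapsto 1/s$ the $\lambda=\infty$ kernel becomes $1/s=\int_0^\infty e^{-st}\,dt$, which still belongs to $\mathcal{L}$.
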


It follows easily from
Kato's result \eqref{eq:qf1} that any function on $\R^+$ with a representation as on the right of
\eqref{eq:repr1} is in the class $\mathcal{I}$. The difficult part of the proof of L\"owner's Theorem is the converse.

Using the bijection $g \rightarrow \tilde{g}$ between $\mathcal{D}$ and $\mathcal{I}$ described above, it follows that if $f \in \mathcal{D}$, then $f$ has a representation of the form
\beq\label{eq:repr2}
f(s) = \int_{\R^+} \frac{1+ \lambda}{ 1 + s  \lambda} ~d \mu (\lambda) , ~~ s > 0 ,
\eeq
for some bounded positive measure $\mu$ on $\R^+$. Using the Laplace transform representation \eqref{eq:laplace1} with $\alpha = 0$, we may write $f$ as
\beq\label{eq:laplace2}
f(s) = \int_{\R^+} e^{-st} h(t) ~dt,
\eeq
where $h$ is defined by
\beq\label{eq:laplace3}
h(t) = \int_{\R^+} \left( 1 + \frac{1}{\lambda} \right) e^{- \frac{t}{\lambda}} ~d \mu (\lambda) .
\eeq
The function $h \in L^1_{\rm loc} (\R^+)$ so by Definition \ref{defn:fncs1}, the function $f \in \mathcal{L}$.

This shows that $\mathcal{D} \subset \mathcal{L}$. On the other hand, the functions on $\R^+$
such as $f(s) = e^{- a s}$, with $a > 0$, or $f(s) = (s + a )^{- \rho}$, with $\rho > 1$ and $a > 0$, belong to the class $\mathcal{L}$ but not to the class $\mathcal{D}$. As a consequence, we obtain the following theorem.

\begin{theorem}\label{thm:different-classes1}
The class of operator monotone decreasing functions $\mathcal{D}$ is strictly contained in the class of functions $\mathcal{L}$.
\end{theorem}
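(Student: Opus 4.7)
The plan is to prove $\mathcal{D}\subsetneq\mathcal{L}$ in two stages: first establish the containment $\mathcal{D}\subseteq\mathcal{L}$ by combining L\"owner's integral representation with a Laplace transform, and then produce an explicit element of $\mathcal{L}\setminus\mathcal{D}$ to rule out equality.

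For the inclusion, I would take $f\in\mathcal{D}$, pass through the involution $g\mapsto\tilde g$ described just before the theorem to obtain $\tilde f\in\mathcal{I}$, apply Hansen's representation (Lemma \ref{lemma:representation1}) to $\tilde f$, and substitute $s\mapsto 1/s$ to recover the formula \eqref{eq:repr2} for $f$. The key step is then to convert the rational kernel $\frac{1+\lambda}{1+s\lambda}$ into a Laplace transform in $s$: writing it as $(1+1/\lambda)\,(s+1/\lambda)^{-1}$ and using the elementary identity $(s+1/\lambda)^{-1}=\int_0^\infty e^{-st}e^{-t/\lambda}\,dt$, valid for $s>0$, I can invoke Tonelli's theorem (everything in sight is nonnegative) to interchange the two integrations and arrive at the representation \eqref{eq:laplace2}--\eqref{eq:laplace3} with a nonnegative density $h$. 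Setting $d\rho(t)=h(t)\,dt$ then exhibits $f$ as a member of $\mathcal{L}$.

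For the strict inclusion, I would verify one of the candidates already singled out in the paragraph preceding the theorem. The simplest choice is $f(s)=e^{-as}$ with $a>0$, which lies in $\mathcal{L}$ via $d\rho=\delta_a$; operator monotonicity would force, through L\"owner's theorem, the partner $\tilde f(s)=e^{-a/s}$ to be a Pick function, but writing $z=x+iy$ with $y>0$ one finds $\Im e^{-a/z}=e^{-ax/|z|^2}\sin(ay/|z|^2)$, a quantity whose sign oscillates as $y\to 0^+$, contradicting the Herglotz property. Alternatively, $f(s)=(s+a)^{-\rho}$ with $\rho>1$ lies in $\mathcal{L}$ by Corollary \ref{cor:loewner1}, whereas the classical Heinz--L\"owner restriction of operator monotone powers $s\mapsto s^r$ to the exponent range $[0,1]$ rules out $f\in\mathcal{D}$.

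The only genuinely nontrivial point in the whole argument is verifying that the candidate function fails to be operator monotone; the inclusion $\mathcal{D}\subseteq\mathcal{L}$ is essentially a Fubini computation once \eqref{eq:repr2} and \eqref{eq:laplace11} are in hand. The Pick-function criterion, however, reduces even the verification of non-monotonicity to a one-line complex calculation, so in practice there is no serious obstacle.
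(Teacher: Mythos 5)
Your proposal is correct and follows essentially the same route as the paper: the inclusion $\mathcal{D}\subseteq\mathcal{L}$ via the involution $g\mapsto\tilde g$, Hansen's representation \eqref{eq:repr1}, and the Laplace transform identity leading to \eqref{eq:laplace2}--\eqref{eq:laplace3}, with strictness witnessed by $e^{-as}$ or $(s+a)^{-\rho}$, $\rho>1$. You in fact supply two details the paper leaves implicit --- the Tonelli justification for interchanging the integrals and the explicit verification (via the Pick-function criterion, best carried out along the imaginary axis $z=iy$, $y\to0^+$) that the exhibited functions fail to be operator monotone --- but these are refinements of the same argument rather than a different one.
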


\section{An application to the proof of Wegner's estimate}

We complete the proof of the Wegner estimate given in \cite{co-hi1}. Since this method of proof seems to have been used in several subsequent papers, we wanted to present the complete argument.
 The Wegner estimate is an upper bound on the probability that a local Hamiltonian has eigenvalues in a given energy interval.
We considered a large cube $\Lambda$ centered at the origin in $\R^d$ with odd integer side length.
We let $H_\omega := - \Delta + V_\omega$ be the random Schr\"odinger operator on $L^2 (\R^d)$ with a standard Anderson-type random potential $V_\omega \geq 0$ (this condition can be removed). We denote by $H_\Lambda$
the restriction of $H_\omega$ to $\Lambda$ with Dirichlet boundary conditions.
This operator has discrete spectrum. For any bounded interval $I = [ I_-, I_+] \subset \R$, we let $E_\Lambda (I)$ be the spectral projection for $H_\Lambda$ and interval $I$. The trace of this projection is finite and it is a random variable. The Wegner estimate proved in \cite[Proposition 4.5]{co-hi1} is
\beq\label{eq:wegner1}
\P \{ {\rm Tr} E_\Lambda (I) \geq 1 \} \leq C_W |I| |\Lambda|,
\eeq
where $C_W >0$ is a finite constant depending on $I_+$.

The proof of the Wegner estimate in \cite{co-hi1} depends on a comparison of the
operator $H_\Lambda$ to a direct sum of operators defined on unit cubes in $\Lambda$.
Let $\Lambda = {\rm Int} \overline{ \{ \cup_{j} \Lambda_1 (j) \}}$
be a decomposition of $\Lambda$ into unit cubes centered at the lattice points $\tilde{\Lambda}$ of $\Lambda$.
In the proof of Proposition 4.5 \cite[section 4]{co-hi1}, we used the  operator inequality
\beq\label{eq:ineq1}
H_\Lambda \geq H_{N, \Lambda} \equiv - \oplus_j \Delta_{N,j} ,
\eeq
where $- \Delta_{N,j}$ is the Neumann Laplacian on a unit cube centered at $j \in \Lambda \cap \Z^d$, if the boundary of the cube does not intersect the boundary of $\Lambda$, or the Laplacian with mixed Neumann-Dirichlet boundary conditions if
the cube's boundary intersects the boundary of $\Lambda$. This inequality is valid only in the operator form sense. It cannot be used in conjunction with Jensen's inequality as done after equation (4.15) in \cite{co-hi1} since the eigenfunctions $\phi_n$ of $H_\Lambda$ are not in the operator domain of $H_{N, \Lambda}$.


We apply Theorem \ref{prop:inequalities1} in order to complete the proof of Wegner's estimate as stated
in \cite[Proposition 4.5]{co-hi1}. We divide the set of indices $\tilde{\Lambda}$ of the unit cubes in $\Lambda$ into two sets: The set $\partial \tilde{\Lambda}$
associated with unit cubes whose boundary intersects $\partial \Lambda$, and the set
${\rm Int} \tilde{\Lambda} $ of interior points.
We take $A = H_{N, \Lambda}$, as defined in \eqref{eq:ineq1},
and $B = H_{ \Lambda}$, the restriction of $H$ to $\Lambda$ with Dirichlet boundary conditions.

We verify conditions (1) and (2) of Theorem \ref{prop:inequalities1}.
As quadratic forms, we have $Q(B) := Q(H_\Lambda) = H_0^1(\Lambda)$, whereas
  $Q(A) := Q(H_{N,\Lambda}) = \{ \oplus_{j \in {\rm Int} \tilde{\Lambda}} H^1(\Lambda_1(j)) \} \oplus
  \{ \oplus_{j \in \partial \tilde{\Lambda}} H_M^1(\Lambda_1(j)) \}$, where $H_M^1 (\Lambda_1(j))$ consists of functions in $H^1(\Lambda_1(j))$ with Neumann boundary conditions
  along $\partial \Lambda \cap \partial \Lambda_1(j)$.
    It follows that $Q(H_{\Lambda}) \subset Q(H_{N, \Lambda})$. The second condition of Theorem \ref{prop:inequalities1} holds identically.

We have $\inf \sigma (A) = 0$ in this case. Then, with the notation of \cite{co-hi1}, the projection $P_B$ is $E_\Lambda (I_\eta)$.
From part 2 of Theorem \ref{prop:inequalities1}, we have
\bea\label{eq:wegner2}
{\rm Tr} E_\Lambda (I_\eta) & \leq & e^{ I_{\eta, +}  } {\rm Tr} ( E_\Lambda (I_\eta) e^{- H_{\Lambda} } )  \nonumber \\
  & \leq & e^{ I_{\eta, +}  } {\rm Tr} ( E_\Lambda (I_\eta) e^{- H_{N, \Lambda} } )  \nonumber \\
   & = & e^{ I_{\eta, +}  } \left( \sum_{j \in \Lambda \cap \Z^d} {\rm Tr} ( E_\Lambda (I_\eta) e^{\Delta_{N,j} } \chi_j ) \right) ,
\eea
where $\chi_j$ is the characteristic function for the unit cube $\Lambda_1(j)$ centered at $j \in \Z^d$.
In this way, we recover (4.16) of \cite{co-hi1}. Following the remainder of the proof there, since the operators $- \Delta_{N,j}$ do not depend on the random variables, we expand the trace in the eigenfunctions of $- \Delta_{N, j}$ and apply the spectral averaging result \cite[Corollary 4.2]{co-hi1}. In this manner, one obtains \eqref{eq:wegner1}. We refer the reader to \cite{CHK:2007} for a more general proof of the Wegner estimate.


\begin{thebibliography}{HupLMW}
\frenchspacing

\bibitem{Bendat-Sherman} J.\ Bendat, S.\ Sherman, \emph{Monotone and convex operator functions}, Trans.\ American Math.\ Soc.\ {\bf 79} (1955) 58--71.

\bibitem{Bhatia-Sinha} R.\ Bhatia, K.\ Sinha, \emph{Variation of real powers of positive operators}, Indiana U.\ Math.\ J.\  {\bf 43} (1994) 913--925.


\bibitem{chansangiam} P.\ Chansangiam, \emph{Operator monotone functions: Characterizations and integral representations}, arXiv:1305.2471v1.


\bibitem{co-hi1} J.-M.\ Combes,  P.\ D.\ Hislop, {\it Localization for some continuous random Hamiltonians in $d$-dimensions}, J.\ Funct.\ Anal.\ {\bf 124} (1994) 149--180.


\bibitem{CHK:2007} J.-M.\ Combes, P.\ D.\ {Hislop}, F.\ {Klopp}, \emph{An optimal Wegner estimate and its application to the global continuity
    of the integrated density of states for random Schr\"odinger operators}, Duke Math.\ J. {\bf 140} (2007), 469--498.


\bibitem{donoghue} W.\ Donoghue, \emph{Monotone matrix functions and analytic continuation}, Berlin: Springer-Verlag, 1974.

\bibitem{hansen} F.\ Hansen, \emph{The fast track to L\"owner's Theorem}, Linear algebra and applications {\bf 438}, No.\ 11 (2013) 4557--4571.

\bibitem{Kato} T.\ Kato, \emph{Perturbation theory for linear operators}, Berlin: Springer-Verlag, 1995.

\bibitem{lowner} C.\ L\"owner, \emph{Uber monotone matrix funktionen}, Math.\ Z.\ {\bf 38} (1934) 177–-216.

\end{thebibliography}
\end{document}